\newtheorem{theorem}{Theorem}[section]
\newtheorem{remark}[theorem]{Remark}
\newtheorem{corollary}[theorem]{Corollary}
\newtheorem{proposition}[theorem]{Proposition}
\newtheorem{definition}[theorem]{Definition}
\numberwithin{equation}{section}
\begin{document}

\title{Characterizing slices for proper actions of locally compact groups  }

\author{Sergey A. Antonyan}
\address{Departamento de  Matem\'aticas,
Facultad de Ciencias, Universidad Nacional Aut\'onoma  de M\'exico,
 04510 Ciudad de M\'exico, M\'exico.}
\email{antonyan@unam.mx}

\begin{abstract} In his seminal work \cite{pal:61}, R. Palais extended  a
substantial part of the theory of compact transformation
groups to the case of proper actions of locally compact groups.  Here we extend  to proper actions some other important results well known for compact group actions. In particular, we prove that if $H$ is a compact subgroup of a locally compact group $G$ and $S$ is   a small (in the sense of Palais) $H$-slice in a proper $G$-space, then the action map $G\times S\to G(S)$ is open.  This is applied to prove that the slicing map $f_S:G(S)\to G/H$ is continuos and open, which provides an external characterization of a slice. Also an equivariant extension theorem is proved for proper actions. As an application, we give  a short proof of the compactness of the Banach-Mazur compacta.

\end{abstract}

\thanks {{\it 2010 Mathematics Subject Classification}.  22F05, 57S20, 54C55, 54H15.}
\thanks{The author was supported in part  by grant IN-115717 from PAPIIT (UNAM)}
\thanks{{\it  Key words and phrases}.  Proper action, Global slice,  Orbit space; John ellipsoid.}

\maketitle
\markboth{SERGEY A. ANTONYAN}{Characterizing slices for proper actions}

\section{Introduction}
The letter  $G$ will denote a Hausdorff topological group with  unit element $e	\in G$.
All spaces are assumed to be completely regular and  Hausdorff.

By an action of  $G$ on a   space $X$ we mean a
continuous map $(g, x)\mapsto gx$ of the product $G\times X$ into
$X$ such that $ex=x$ and $(gh)x=g(hx)$, whenever $x\in X$, $g,
h\in G$ and $e$ is the unity of $G$.   A space $X$ together with a
fixed action of the group $G$ is called a $G$-space.

If $X$ and $Y$ are  $G$-spaces, then a continuous map $f:X\to Y$
is called a $G$-map or an equivariant map, if $f(gx)=gf(x)$ for
every $x\in X$ and $g\in G$. For a point  $x\in X$, the subgroup
$G_x=\{g\in G\mid gx=x\}$ of $G$ is called the {\em stabilizer} or
{\em isotropy subgroup} at $x$. Clearly, $G_x\subset G_{f(x)}$
whenever $f$ is a $G$-map and $x\in X$.

If $X$ is a $G$-space, then for a subset $S\subset X$ and a subgroup $H\subset G$, the $H$-hull (or $H$-saturation) of $S$ is defined as follows:  $H(S)$= $\{hs \ |\ h\in H,\ s\in S\}$. If $S$ is the  one point set
$\{x\}$, then the $H$-hull $H(\{x\})$ usually is denoted by $H(x)$  and called  the $H$-orbit of $x$. The orbit
space $X/H$ is always considered in its quotient topology.
A subset $S\subset X$ is called $H$-invariant or,  if it coincides with
its $H$-hull, i.e., $S=H(S)$.
 A $G$-invariant set is also
called, simply, invariant.

For a closed subgroup $H \subset G$, by $G/H$ we will denote the $G$-space
of cosets $\{xH ~| \ x\in G\}$ under the action induced by left translations, i.e., $g(xH)=(gx)H$ whenever $g\in G$ and $xH\in G/H$.

\medskip

One of the most fundamental facts in the theory of $G$-spaces when $G$ is
a compact Lie group is, the so called, Slice Theorem. In its full generality it 
was proved by G.Mostow \cite{mostow} (see also \cite[Corollary 1.7.19]{pal:60}).

\begin{theorem}[Slice Theorem]\label{T:slice}
 Let $G$ be a compact Lie group and $X$ a  $G$-space. Then for every point $x\in X$, there exists a $G_x$-slice $S\subset X$ such that $x\in S$.
\end{theorem}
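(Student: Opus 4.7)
The plan is to reduce to a linear setting via the Mostow--Palais equivariant embedding theorem, construct a slice in the ambient linear $G$-space using the orthogonal complement of the tangent space of the orbit, and then intersect with $X$.

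First, since $G$ is a compact Lie group and $X$ is completely regular Hausdorff, the Mostow--Palais equivariant embedding theorem embeds $X$ as a $G$-invariant subspace of an orthogonal linear $G$-space $V$ (a Hilbert $G$-space on which $G$ acts by isometries). One may thus assume $X \subset V$ and $x \in V$. The orbit $G(x)$ is then a smooth compact submanifold of $V$, $G$-diffeomorphic to $G/G_x$, and the stabilizer $G_x$ fixes $x$ and acts orthogonally on $V$.

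Next, I would build the slice first in $V$ and then restrict. The tangent space $T_x G(x) \subset V$ is $G_x$-invariant, so it admits a $G_x$-invariant orthogonal complement $N_x$. For a sufficiently small $G_x$-invariant open neighborhood $U$ of $0$ in $N_x$, set
\[
S_V := x + U, \qquad S := S_V \cap X.
\]
By construction $S$ is $G_x$-invariant and contains $x$. The main task is to verify that, after shrinking $U$ if necessary, the equivariant action map
\[
\varphi : G \times_{G_x} S_V \longrightarrow G(S_V), \qquad [g, s] \longmapsto gs,
\]
is a homeomorphism onto an open subset of $V$; the induced quotient $G(S_V) \cong G \times_{G_x} S_V \to G/G_x$ is the slicing map. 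Intersecting with $X$ then yields the required slice, since $X$ is $G$-invariant gives $G(S) = G(S_V) \cap X$, which is open in $X$, and the slicing map restricts to $f_S: G(S) \to G/G_x$.

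The critical analytic step, and the principal obstacle, is establishing the openness and injectivity of $\varphi$. This reduces to the inverse function theorem applied to the smooth map $G \times N_x \to V$, $(g, n) \mapsto g(x + n)$, whose differential at $(e, 0)$ is the isomorphism $T_e G / \mathrm{Lie}(G_x) \oplus N_x \cong T_x G(x) \oplus N_x$. Two subtleties must be overcome simultaneously: the chosen $U$ must be \emph{$G_x$-invariant} (secured by replacing $U$ with $\bigcap_{h \in G_x} hU$ or by averaging) and must be small enough that $\varphi$ is injective along the entire compact orbit $G(x)$ (secured by a Lebesgue-number argument exploiting the compactness of $G(x)$). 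These conditions together produce the desired $G_x$-slice.
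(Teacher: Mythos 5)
The paper does not prove this statement; it is quoted from Mostow \cite{mostow} and Palais \cite[Corollary 1.7.19]{pal:60}, so your attempt has to be measured against that classical argument. Your smooth-geometry core (normal complement $N_x$ to $T_xG(x)$, inverse function theorem at $(e,0)$, shrinking $U$ by a compactness argument along the orbit to get injectivity of $G\times_{G_x}S_V\to V$) is the standard proof of the \emph{differentiable} slice theorem for finite-dimensional orthogonal $G$-representations, and that part is sound. The genuine gap is the very first reduction: the theorem is asserted for an \emph{arbitrary} (completely regular Hausdorff) $G$-space $X$, and the Mostow--Palais embedding theorem does not place such an $X$ equivariantly inside an orthogonal linear $G$-space of the kind your argument needs. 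Mostow's embedding requires $X$ to be separable metrizable, finite-dimensional and with finitely many orbit types; a general Tychonoff $G$-space only embeds equivariantly into an infinite-dimensional locally convex (or Banach) linear $G$-space, where your machinery breaks down: for a merely continuous orthogonal representation the orbit $G(x)$ need not be a smooth submanifold (not every vector is a smooth vector), there is no tangent space $T_xG(x)$ to complement, and the inverse function theorem is unavailable. So the proof as written establishes the slice theorem only for smooth actions on (subsets of) finite-dimensional representations, not in the stated generality.

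The classical proof avoids embedding $X$ altogether and embeds only the \emph{orbit}: choose a finite-dimensional orthogonal representation $V$ and a vector $v$ with $G_v=G_x$ (possible for any closed subgroup of a compact Lie group), so that $G/G_x\cong G(v)\subset V$ is a smooth compact submanifold admitting an equivariant tubular retraction $r$ of a neighborhood onto it. The tautological equivariant map $G(x)\to G/G_x$, $gx\mapsto gG_x$, is continuous on the compact set $G(x)$; complete regularity lets one extend it to a continuous map $X\to V$, and averaging over Haar measure makes the extension equivariant without changing it on $G(x)$. Composing with $r$ gives an equivariant map $f:U\to G/G_x$ on an invariant neighborhood $U$ of $G(x)$ with $f(x)=eG_x$, and $S=f^{-1}(eG_x)$ is the desired $G_x$-slice, since $\{eG_x\}$ is a global $G_x$-slice of $G/G_x$ and inverse images of slices are slices. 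If you want to salvage your approach, replace the embedding of $X$ by this embedding of the orbit plus the Tietze--Gleason extension-and-averaging step; your tubular-neighborhood analysis then applies verbatim to $G(v)\subset V$ rather than to $G(x)\subset X$.
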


We recall  the  definition of an $H$-slice (cf. \cite[\S 1.7]{pal:60}).

\begin{definition}\label{D:slice} Let $X$ be a $G$-space and  $H$ a closed subgroup of $G$.  A subset $S\subset  X$ is called an $H$-slice in $X$, if:
 \begin{enumerate}
 \item $ S$ is H-invariant, i.e., $H(S) = S$,
 \item  $S$ is closed in $G(S)$,
\item  if $G \setminus  H$, then $gS\cap S = \emptyset$,
 \item  the saturation $G(S)$ is open in $X$.
  
\noindent 
If in addition $G(S) = X$, then we say that $S$ is a global $H$-slice of $X$.
 \end{enumerate}
\end{definition}

To each $H$-slice $S\subset X$, a $G$-map $f_S:G(S)\to G/H$, called the {\it slicing map}, is associated according to the following rule: 
$$f_S(gs)=gH\quad \text{for every} \quad  g\in G, \ s\in S.$$
Lets check that that $f_S$ is well defined.
Indeed,   if $gs=g's'$ for some $g, g'\in G$ and $s,s' \in S$ then 
   $s =g^{-1}g's' \in S \cap g^{-1}g'S$.  
  Then item (3) of Definition \ref{D:slice} yields that   $g^{-1}g' \in H$ which is equivalent to 
  $ gH = g'H$, as required. 
  Thus,  the slicing map $f_S$ is well defined. 
  
  It is immediate that the slicing map is equivariant, i.e., $f_S(gx) = gf_S(x)$  for all $ x \in  G(S)$  and $ g \in  G$.
  It is also clear that $S=f^{-1}(eH)$, where $eH\in G/H$ stands for the coset of the unit element $e\in G$.  
       
It is a well known fact that the slicing map is continuous whenever the acting group $G$ is compact (see e.g., \cite[Theorem 1.7.7]{pal:60}); this  gives  the following  important external characterization of an $H$-slice. 

\begin{theorem}\label{charatercompact} Let $G$ be a compact group, $H$ a closed subgroup of $G$, and $X$ a $G$-space. Then there exists a   one-to-one correspondence between all equivariant maps $f:X\to G/H$ and global $H$-slices $S$ in $X$ given by $f\mapsto S_f:=f^{-1}(eH)$. The inverse correspondence is given by $S\mapsto f_S$, above defined.
\end{theorem}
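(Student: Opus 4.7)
The plan is to verify that the two maps $f\mapsto S_f$ and $S\mapsto f_S$ are well defined into the stated target sets and are mutually inverse. Three items need attention: (a)~$S_f$ is a global $H$-slice whenever $f$ is an equivariant map; (b)~the slicing map $f_S$ is continuous (well-definedness and equivariance are already established in the excerpt); and (c)~the compositions $f\mapsto f_{S_f}$ and $S\mapsto S_{f_S}$ recover the original objects.

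Parts (a) and (c) are essentially formal, using that $G/H$ is Hausdorff (since $H$ is closed), so $\{eH\}$ is a closed point. For (a): $H$-invariance of $S_f$ follows from equivariance, since $f(hs)=hf(s)=h\cdot eH=eH$ for $h\in H$; closedness of $S_f$ in $X$ is immediate from continuity of $f$; if $gs\in S_f$ with $s\in S_f$ then $gH=gf(s)=f(gs)=eH$, giving condition~(3) of Definition~\ref{D:slice}; and $G(S_f)=X$ because for any $x\in X$ with $f(x)=gH$ one has $g^{-1}x\in S_f$. For (c): $S_{f_S}=S$ was already observed in the excerpt, while $f_{S_f}(gs)=gH=f(gs)$ yields $f_{S_f}=f$.

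The real content is (b), the continuity of $f_S$. My approach is to introduce the manifestly continuous composite $F\colon G\times S\to G/H$, $F(g,s)=gH$ (continuous as the composition of $(g,s)\mapsto g$ with the quotient map $G\to G/H$), together with the orbit map $\theta\colon G\times S\to G(S)$, $\theta(g,s)=gs$, and to observe that $F=f_S\circ\theta$. It then suffices to show that $\theta$ is a quotient map, and for this I would prove that $\theta$ is closed. Given a closed $C\subset G\times S$ and a net $g_\alpha s_\alpha\to y$ in $G(S)$ with $(g_\alpha,s_\alpha)\in C$, compactness of $G$ permits a subnet with $g_\alpha\to g$; then $s_\alpha=g_\alpha^{-1}(g_\alpha s_\alpha)\to g^{-1}y$, and closedness of $S$ in $G(S)=X$ forces $g^{-1}y\in S$. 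Hence $(g,g^{-1}y)\in C$ and $y=\theta(g,g^{-1}y)\in\theta(C)$, so $\theta(C)$ is closed. Being a closed continuous surjection, $\theta$ is a quotient map, and continuity of $F$ transfers to continuity of $f_S$.

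The main obstacle is precisely this last step: the closedness of $\theta$ is the sole point at which compactness of $G$ is indispensable (extracting a convergent subnet from an arbitrary net in $G$), and it is exactly what fails in the locally compact setting. This is the obstruction that motivates the more delicate openness and continuity results developed in the rest of the paper.
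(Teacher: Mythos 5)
Your proof is correct, and it is worth noting that the paper does not actually prove Theorem~\ref{charatercompact}: it quotes the continuity of the slicing map for compact $G$ from Palais and states the correspondence as a consequence, reserving its written argument for the generalization to proper actions (Theorem~\ref{T:slicing}). Your treatment of the formal parts (that $S_f$ is a global $H$-slice and that the two assignments are mutually inverse) is complete and matches what the paper leaves implicit. For the continuity of $f_S$, your decomposition $F=f_S\circ\theta$ through the action map $\theta\colon G\times S\to X$ is structurally the same square the paper uses (there written $f\alpha=p\pi$), but the key lemma differs: you show $\theta$ is a \emph{closed} continuous surjection, hence a quotient map, by extracting a convergent subnet from the compact group $G$, whereas the paper's route (in the proper setting) is to show the action map is \emph{open} (Theorem~\ref{P:open}(1)), which in turn rests on Abels' result that the action map on $G\times S$ is closed when $S$ is closed and small. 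So closedness of the action map is the underlying engine in both treatments; you prove it directly and elementarily in the compact case, where every subset is automatically small, instead of importing it. What you give up relative to the paper's openness lemma is only the additional conclusion that $f_S$ is an open map, which Theorem~\ref{T:slicing} asserts but Theorem~\ref{charatercompact} does not require. Your closing diagnosis --- that the subnet extraction is exactly what fails for non-compact $G$ and forces the smallness hypothesis later --- is accurate and is precisely the motivation the paper gives (see Remark~\ref{R:small}).
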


Below, in Theorem \ref{T:slicing}
 we generalize Theorem \ref{charatercompact} to the case of proper actions of arbitrary locally compact groups, which are  an important generalization of actions of compact groups.
This result is proceeded by Theorem \ref{P:open} which establishes some important properties of small global slices. Then these results are applied in Section \ref{S:orbit} to  orbit spaces    of proper $G$-spaces. In Section \ref{S:extension} we prove an equivariant extension theorem which is further applied to get an equivariant extension of a continuous map defined on a small cross section. In the final Section \ref{S:BM}  the results of Sections \ref{S:slice} and \ref{S:orbit} are applied to give a short proof of the compactness of the Banach-Mazur compacta $BM(n)$, $n\ge 1$.

Recall that the concept of a {\it proper action} of a  locally compact  group was introduced in  1961 in the seminal work of R. Palais~\cite{pal:61}.
 This notion allowed R. Palais to extend  a
substantial part of the theory of compact Lie transformation
groups to non-compact  ones.  
Perhaps, some detailed definitions are in order here.

 Let $G$ be a locally compact group and  $X$  a $G$-space. Two subsets  $U$ and $V$ in  $X$  are called  thin relative to each other
  \cite[Definition 1.1.1]{pal:61},  if the set
  $$\langle U,V\rangle=\{g\in G \mid  gU\cap V\ne \emptyset\}$$ called {\it the transporter} from $U$ to $V$,
has  compact closure in $G$.
   A subset $U$ of a $G$-space $X$ is called {\it $G$-small}, or just {\it small}, \ if  every point in $X$ has a neighborhood thin relative to $U$. 
      
   \begin{definition}[\cite{pal:61}] Let $G$ be a locally compact group. 
   A $G$-space $X$
is called  {\it  proper},  if   every point in  $X$ has a small neighborhood.
\end{definition}

Each orbit in a proper $G$-space is closed, and each stabilizer is compact \cite[Proposition
1.1.4]{pal:61}. It is easy to check that: (1) the product of two $G$-spaces
is proper whenever one of them is so; (2) the inverse image of a proper $G$-space under a $G$-map is
again a proper $G$-space.

Important examples of  proper $G$-spaces are the coset spaces
$G/H$ with $H$ a compact subgroup of a locally compact group $G$.
Other interesting examples the reader can find in 
\cite{ab:78}, \cite{ant:05},  \cite{kos:65} and \cite{pal:61}.

\

 Among other important results,   Palais  proved in \cite{pal:61}
  the following generalization of the Slice Theorem.
 
 \begin{theorem}[Slice Theorem for proper actions]\label{PalaisSlice} 
 Let $G$ be a  Lie group (not necessarily compact) and $X$ a  proper $G$-space. Then for every point $x\in X$, there exist an invariant neighborhood $U$ of $x$ and an equivariant map $f:U\to G/G_x$ such that $x\in f^{-1}(eG_x)$. 
\end{theorem}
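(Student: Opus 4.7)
The plan is to realize $(U,f)$ as an equivariant tube around the orbit $Gx$, using the principal bundle structure of $\pi:G\to G/H$. Put $H:=G_x$, which is compact because $X$ is a proper $G$-space. Since $G$ is a Lie group and $H$ is a compact subgroup, $G/H$ is a smooth manifold and $\pi$ is a principal $H$-bundle; in particular it admits a smooth local section $\sigma:V\to G$ on some open neighborhood $V$ of $eH$ with $\sigma(eH)=e$. The strategy is to construct an $H$-invariant, small (in Palais's sense) open neighborhood $S\ni x$ with the transversality property $gS\cap S\ne\emptyset\Rightarrow g\in H$, declare $U:=G(S)$, and take $f$ to be the slicing map $f(gs):=gH$.

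The first step is to obtain an $H$-invariant small open neighborhood of $x$. By definition of proper action, $x$ has some small open neighborhood $W$. Compactness of $H$, together with $Hx=\{x\}\subset W$ and continuity of the action, yields an open $W'\ni x$ with $HW'\subset W$; then $S_0:=HW'$ is open, $H$-invariant, contained in $W$, and hence still small.

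The main obstacle is to shrink $S_0$ to an $H$-invariant open $S\ni x$ that is \emph{transverse} to $G$ in the sense $gS\cap S\ne\emptyset\Rightarrow g\in H$. By smallness of $S_0$, the closure $K:=\overline{\langle S_0,S_0\rangle}$ is compact in $G$ and contains $H$. Using $\sigma$, the set $H\cdot\sigma(V)$ is an open neighborhood of $H$ in $G$ with unique factorization $g=h\,\sigma(v)$, so it suffices to arrange simultaneously
\[
\langle S,S\rangle\subset H\cdot\sigma(V)\qquad\text{and}\qquad \sigma(v)S\cap S=\emptyset\ \text{ for every }v\in V\setminus\{eH\}.
\]
The first inclusion will be achieved by a finite-cover argument applied to the compact set $K\setminus H\cdot\sigma(V')$ (for a suitably smaller $V'\subset V$), combined with Hausdorffness of $X$, continuity of the action, and an $H$-saturation to preserve $H$-invariance. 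The second inclusion rests on the crucial observation that $\sigma(v)x\ne x$ whenever $v\ne eH$ (because $\sigma(v)\notin H$): a further compactness argument on a relatively compact neighborhood of $eH$ in $V$, followed once more by $H$-saturation, yields the desired property.

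Once $S$ is in place, injectivity of the continuous $G$-equivariant tube map $\widetilde\Phi:G\times_H S\to X$, $[g,s]\mapsto gs$, follows directly from the transversality, and $(v,s)\mapsto\sigma(v)s$ provides a continuous local inverse on $V\times S$ near every point of the form $[e,s]$; translating by elements of $G$, $\widetilde\Phi$ becomes a local, hence global, homeomorphism onto the open $G$-invariant set $U:=G(S)$. Finally, $f:U\to G/H$ defined by $f(gs):=gH$ is well defined by transversality, equivariant by construction, and continuous as the composition of $\widetilde\Phi^{-1}$ with the canonical projection $G\times_H S\to G/H$. Writing $x=e\cdot x$ with $x\in S$ gives $f(x)=eH$, so $x\in f^{-1}(eG_x)$, as required.
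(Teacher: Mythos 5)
First, a remark on the comparison itself: the paper does not prove Theorem \ref{PalaisSlice}; it is quoted from Palais \cite{pal:61}, so there is no in-paper argument to measure yours against. Your proposal, however, has a fatal flaw on its own terms. You set out to construct an \emph{open} $H$-invariant neighborhood $S$ of $x$ with the transversality property $gS\cap S\ne\emptyset\Rightarrow g\in H$. No such set can exist unless $H=G_x$ is open in $G$. Indeed, for a nonempty open $S$ the transporter $\langle S,S\rangle=\{g\in G\mid gS\cap S\ne\emptyset\}$ is open in $G$ (if $gs\in S$ for some $s\in S$, then $g's\in S$ for all $g'$ near $g$), and your property together with $H$-invariance forces $\langle S,S\rangle=H$; hence $H$ would be open, which fails whenever the orbit of $x$ is not discrete (e.g.\ $G=\mathbb R$ acting on $\mathbb R^2\,$ by translation in the first coordinate, $H=\{e\}$). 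The same obstruction shows concretely in your displayed requirement $\sigma(v)S\cap S=\emptyset$ for $v\ne eH$: since $\sigma(v)\to e$ as $v\to eH$, we have $\sigma(v)x\to x$, so every neighborhood $S$ of $x$ contains $\sigma(v)x$ for $v$ sufficiently close to $eH$, and the requirement is violated. Your proposed ``further compactness argument'' cannot repair this, because the failure occurs arbitrarily close to $eH$, where there is nothing bounded away from $H$ to separate off.

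The underlying confusion is between the slice and the tube. In Definition \ref{D:slice} the slice $S$ is closed in its open saturation $G(S)$ and is in general a thin transversal to the orbit, not a neighborhood of $x$; only $U=G(S)$ is open. Producing such a transversal is the entire content of the theorem, and it cannot be obtained by shrinking and $H$-saturating neighborhoods. In Palais's proof one first constructs, using the compactness of $H=G_x$ and an embedding of an $H$-invariant neighborhood into an orthogonal $H$-representation, an $H$-slice for the restricted action of the compact group $H$, and only then uses a local cross-section $\sigma:V\to G$ of $G\to G/H$ to show that $(v,s)\mapsto\sigma(v)s$ parametrizes a neighborhood of $x$; in the smooth category this step is an inverse-function-theorem argument transverse to the orbit. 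Your final paragraph (the tube map $G\times_H S\to X$ and the definition of $f(gs)=gH$) is the standard and correct way to finish \emph{once} a genuine slice $S$ is in hand; but the construction of $S$ is exactly the part your argument does not, and as formulated cannot, supply.
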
   

 Since the distinguished point $eG_x$ is a global $G_x$-slice for the proper $G$-space $G/G_x$, and since  the inverse image of a slice is again a slice (see \cite[Corollary 1.7.8]{pal:60}), we infer that 
  $S=f^{-1}(eG_x)$ is a $G_x$-slice (in the sense of Definition \ref{D:slice}). Besides, $S$ is a small subset of its saturation $G(S)=U$ since $eG_x$ is a small subset of $G/G_x$.
 Therefore, at the first glance, it may seem that Theorem \ref{PalaisSlice}   establishes something more than the existence of a slice.
  However, this is not the case as it follows from our Theorem \ref{T:slicing} below.
 
 \
 
 \section{Some important properties of small slices}\label{S:slice}
 
 In this section we will prove  the following two main results.
      
\begin{theorem}\label{P:open} Let $G$ be a locally compact group,  $X$  a proper $G$-space, $H$ a compact subgroup of $G$, and $S$ a global $H$-slice of
$X$ which is a small subset. Then 
\begin{enumerate}
\item the restriction $f:G\times S\to X$  of the action is an open map.
\item the restriction $p:S\to X/G$ of the orbit map $X\to X/G$ is an open  map.\end{enumerate}
\end{theorem}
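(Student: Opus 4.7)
My plan is to prove (1) by a direct net argument that isolates smallness of $S$ and the slice axiom (3), and then to derive (2) as an immediate corollary. For (1), I first reduce to checking openness of $f$ at points of the form $(e,s_0)$ with $s_0\in S$: every basic open neighborhood of $(g_0,s_0)\in G\times S$ has the form $g_0 U\times V$ with $U$ an open neighborhood of $e$ in $G$ and $V$ an open neighborhood of $s_0$ in $S$, and its image $g_0(UV)$ is a neighborhood of $g_0 s_0$ in $X$ iff $UV$ is a neighborhood of $s_0$. So the task becomes: show that $UV$ is a neighborhood of $s_0$ in $X$.

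To do this I argue by contradiction. Assume a net $(x_i)$ in $X\setminus UV$ converges to $s_0$. Since $G(S)=X$, write $x_i=g_is_i$ with $g_i\in G$ and $s_i\in S$. By smallness of $S$, pick a neighborhood $N$ of $s_0$ with $\overline{\langle S,N\rangle}$ compact in $G$; eventually $x_i\in N$, and since $s_i=g_i^{-1}x_i\in g_i^{-1}N\cap S$ we get $g_i^{-1}\in\langle S,N\rangle$. Passing to a subnet, $g_i\to g^*$ for some $g^*\in G$ (because $\{g_i^{-1}\}$ lies in a relatively compact set, and inversion is a homeomorphism). Then $s_i=g_i^{-1}x_i\to (g^*)^{-1}s_0=:s^*$, and closedness of $S$ in $G(S)=X$ forces $s^*\in S$. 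From $g^*s^*=s_0\in S\cap g^*S$, slice axiom (3) of Definition \ref{D:slice} yields $g^*\in H$. The decisive rewriting is then
\[
x_i=g_is_i=\bigl(g_i(g^*)^{-1}\bigr)\bigl(g^*s_i\bigr),
\]
where the first factor tends to $e$, so lies in $U$ eventually, while the second factor $g^*s_i$ lies in $H\cdot S=S$ and converges to $g^*s^*=s_0\in V$, so lies in $V$ eventually. Hence $x_i\in UV$ along the subnet, contradicting the assumption $x_i\notin UV$.

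Part (2) then follows from (1) essentially for free. Let $V\subset S$ be open and let $\pi\colon X\to X/G$ denote the orbit map, which is a quotient map. Then $p(V)=\pi(V)$ is open in $X/G$ iff $\pi^{-1}(\pi(V))=G(V)$ is open in $X$; but $G(V)=f(G\times V)$ is the image of an open set under the open map of (1). The main obstacle I anticipate is justifying the rewriting $x_i=\bigl(g_i(g^*)^{-1}\bigr)\bigl(g^*s_i\bigr)$: this trick uses $H$-invariance of $S$ in an essential way to keep the second factor inside $S$, so that it can be compared with $V\subset S$, and it is precisely here that the slice condition $g^*\in H$ obtained from axiom (3) does the real work.
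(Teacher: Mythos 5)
Your proof is correct, but it takes a genuinely different route from the paper's. For (1) the paper argues by complementation: given open sets $O\subset G$ and $U\subset S$, it sets $W=\bigcup_{h\in H}(Oh^{-1})\times (hU)$, uses slice axiom (3) to verify the identity $X\setminus OU=f\bigl((G\times S)\setminus W\bigr)$, and then quotes Abels' proposition that the action map $G\times S\to X$ is a \emph{closed} map when $S$ is a closed small subset of a proper $G$-space; openness of $OU$ then falls out because its complement is the closed image of a closed set. You instead give a direct, self-contained net argument: smallness of $S$ is used to extract a convergent subnet $g_i\to g^*$, axiom (3) forces $g^*\in H$, and $H$-invariance of $S$ justifies the refactoring $x_i=\bigl(g_i(g^*)^{-1}\bigr)\bigl(g^*s_i\bigr)$ with the two factors landing eventually in $U$ and $V$. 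The paper's route is shorter modulo the citation; yours avoids the external closedness lemma and makes explicit exactly where smallness, closedness of $S$, $H$-invariance and axiom (3) each enter --- in effect you reprove inline the compactness extraction hidden in Abels' result. One cosmetic slip: from $g_is_i=x_i\in N$ you get $g_i\in\langle S,N\rangle$, equivalently $g_i^{-1}\in\langle N,S\rangle=\langle S,N\rangle^{-1}$, rather than $g_i^{-1}\in\langle S,N\rangle$; since inversion is a homeomorphism this does not affect the relative compactness of $\{g_i\}$ and the argument goes through. Your part (2) is essentially the paper's: both reduce to openness of the saturation $G(V)$ in $X$, which is immediate from (1).
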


\begin{proof} (1) Let $O$ be an open subset of $G$ and $U$ be an open subset of $S$.  It suffices to show that the set $OU=\{gu~|~g\in O, \ u\in U\}$ is open in $X$.

Define $W=\bigcup\limits_{h\in H}(Oh^{-1})\times (hU)$.
Observe that
$$X\setminus OU=f\bigl((G\times S)\setminus W\bigr).$$

Indeed, since $OU=f(W)$ \ and \ $X=f(G\times S)$, the inclusion  $X\setminus OU\subset f\bigl((G\times S)\setminus W\bigr)$ follows.

Let us establish  the converse inclusion
$f\bigl((G\times S)\setminus W\bigr)\subset X\setminus OU$.

Assume the contrary, that there exists a point  $gs\in f\bigr(G\times S)\setminus W\bigr)$ with  $(g, s)\in (G\times S)\setminus W$ such that  $gs\in OU$. Then $gs=tu$ \ for some \ $(t, u)\in O\times U$. Denote $h=g^{-1}t$. One has  
$$s=g^{-1}tu=hu$$
and
$$(g, \ s)=(tt^{-1}g, \ g^{-1}tu)=(th^{-1}, \ hu)\in (O h^{-1})\times (hU).$$
  Since both $s$ and $u$ belong to $S$,  and  $s=hu$, by item (3) of Definition \ref{D:slice}, we conclude that  $h\in H$. Consequently, $(O h^{-1})\times (hU)\subset W$, yielding that    $(g, s)\in W$, a contradiction. Thus, the equality $X\setminus OU=f\bigl((G\times S)\setminus W\bigr)$ is proved.

Now we observe that  $(G\times S)\setminus W$  is  closed  in  $G\times S$, and hence, in  $G\times X$.
  Since $S$ is a closed small subset of $X$,  by \cite[Proposition 1.4(c)]{ab:78},  $f$ is a closed map. This yields that   the set
$f\bigl((G\times S)\setminus W\bigr)$  is closed,  and hence, $OU$ is open in $X$, as required.
\smallskip

(2) Indeed, let $U$ be an open subset of $S$. By item (1) of this theorem, the saturation $G(U)$ is open in $X$  yielding  that the intersection $G(U)\cap S$ is open in $S$. Since 
  $p^{-1}\big(p(U)\big)=G(U)\cap S$ we conclude that $p(U)$ is open in $X/G$, as required.
\end{proof}
 
  %\begin{remark} The maps $f:G\times S\to X$  and  $p:S\to X/G$  in this theorem are also proper (or perfect) maps, i.e., they are closed and the inverse image of any point is compact. This follows from \cite[Proposition 1.4 (b) and (c)]{ab:78}.  \end{remark}
  
 \medskip

This therem is now applied to give an external characterization of a slice in a proper $G$-space.

 \begin{theorem}\label{T:slicing}
  Let $G$ be a locally compact group,  $H$ a compact subgroup of $G$.
 Let  $X$ be a proper  $G$-space
  and  $S$ a global $H$-slice of  $X$ which is a small subset in $X$. Then the slicing map  $f_S:X \to G/H$ is continuous  and open.  If, in addition, $S$ is compact then $f_S$  is also closed.
   
    Conversly, if one has an equivariant map $f:X \to G/H$, then the inverse image $S=f^{-1}(eH)$ is a  global $H$-slice which is a small subset of   $X$, and $f_S=f$.    
      \end{theorem}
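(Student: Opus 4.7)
The plan for the forward direction is to exploit the commutative triangle
\[
G\times S \ \xrightarrow{\ \alpha\ }\ X \ \xrightarrow{\ f_S\ }\ G/H,
\]
where $\alpha(g,s)=gs$ is the action map of Theorem \ref{P:open}(1), and where $f_S\circ\alpha$ equals the map $q\colon G\times S\to G/H$, $(g,s)\mapsto gH$. The map $q$ factors as the projection $G\times S\to G$ followed by the canonical quotient $G\to G/H$; both pieces are continuous and open (the projection because $S$ is a topological factor, and $G\to G/H$ because subgroup quotient maps are open). Thus $q$ is continuous and open. By Theorem \ref{P:open}(1), $\alpha$ is open; it is also continuous and surjective (since $G(S)=X$), hence a quotient map. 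Continuity of $f_S$ follows immediately from $f_S\circ\alpha=q$, and openness follows from the identity $f_S(U)=q(\alpha^{-1}(U))$ for any open $U\subset X$, which in turn uses surjectivity of $\alpha$.

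For the closedness statement under the extra hypothesis that $S$ is compact, I would take a closed set $F\subset X$ and set $A=\pi_G(\alpha^{-1}(F))\subset G$, where $\pi_G\colon G\times S\to G$ is the first projection. Since $\alpha^{-1}(F)$ is closed in $G\times S$ and $S$ is compact, $\pi_G$ is a closed map, so $A$ is closed in $G$. A short computation using the $H$-invariance of $S$ shows $AH=A$, and writing every element of $F$ as $gs$ with $s\in S$ gives $f_S(F)=q(A)$. Hence $q^{-1}(f_S(F))=AH=A$ is closed, so $f_S(F)$ is closed in $G/H$.

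For the converse, given an equivariant $f\colon X\to G/H$, I would set $S=f^{-1}(eH)$ and verify the four axioms of Definition \ref{D:slice} directly: $H$-invariance from $f(hs)=hf(s)=eH$; closedness from the fact that $\{eH\}$ is closed in $G/H$; the disjointness condition because $gs\in S$ forces $gH=f(gs)=eH$; and $G(S)=X$ because for any $x\in X$ with $f(x)=gH$ we have $g^{-1}x\in S$. Smallness of $S$ would come from two facts: first, $\{eH\}$ is small in $G/H$ (which is a proper $G$-space, and any subset of a small neighborhood is small), and second, the inverse image of a small set under an equivariant map is small, by the one-line transporter inclusion $\langle f^{-1}(W),f^{-1}(A)\rangle \subset \langle W,A\rangle$. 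The identity $f_S=f$ is then immediate: any $x=gs$ with $s\in S$ satisfies $f(gs)=gf(s)=gH=f_S(gs)$.

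The only step that requires any care is the closedness assertion when $S$ is compact; the rest is diagram chasing (for continuity and openness of $f_S$) and direct verification of the slice axioms plus the transporter inclusion (for the converse).
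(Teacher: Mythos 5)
Your proof is correct and follows essentially the same route as the paper: the same commutative square $f_S\circ\alpha=p\circ\pi$ with $\alpha$ open by Theorem \ref{P:open}(1) gives continuity and openness, and your converse is exactly the content of the Palais fact the paper cites (the preimage of a small global $H$-slice under an equivariant map is again one), verified directly. The only differences are cosmetic: for closedness the paper simply observes that $p\colon G\to G/H$ is closed because $H$ is compact and the projection is closed because $S$ is compact, whereas you reach the same conclusion via the saturation identity $AH=A$ (note the $q^{-1}$ in that step should read $p^{-1}$).
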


\begin{proof} 
  Let  $\alpha:G \times S \to X$ be the restriction of the action   $G\times X\to  X$ and let  
  $\pi:{G \times S} \to G $ denote the  projection.
  
  The quotient map  $p:{G } \to G/H, \ p(g)= gH$, is  open  (and closed since $H$ is compact) and it makes the following diagram commutative:
  \[ \xymatrix{G \times S \ar[r]^-\pi \ar[d]^\alpha & G  \ar[d]^p
                                                       \\
               X \ar[r]^f & G/H.
               }   \]
  Since $S$ is a closed small subset of $X$,  Theorem  \ref{P:open}(1) yields that  $\alpha$ is an open map. Since  $\pi$ and $p$ are continuous, the  equality  $f \alpha = p \pi$ implies that $f$ is continuous. 

Since the maps $\pi$ and $p$ are open and $\alpha$ is continuous, we infer that $f$ is also open.
 If, in addition, $S$ is compact then the map $\pi$ is also closed, which yields that $f$ is closed.

The converse assertion is immediate since the point $eH\in G/H$ is a small global $H$-slice for $G/H$ and an inverse image of a small global $H$-slice is so (see \cite[p.\,10]{pal:61}). The equality   $f_S=f$ is 
a simple verification.
\end{proof}

\medskip

Since each compact subset of a proper $G$-space is a small subset \cite[p.\,300]{pal:61}, Theorem \ref{T:slicing} has the following immediate corollary.

\begin{corollary}\label{C:slice}  Let $G$ be a locally compact group,  $H$ a compact subgroup of $G$.
 Let  $X$ be a proper  $G$-space
  and  $S$ a compact global $H$-slice of  $X$. Then the slicing map  $f_S:X \to G/H$
 is continuous,  open and closed.
      \end{corollary}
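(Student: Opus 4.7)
The plan is to derive this statement as an immediate application of Theorem~\ref{T:slicing}. That theorem yields continuity and openness of $f_S$ from the hypothesis that $S$ is a small global $H$-slice, and supplies closedness under the additional hypothesis that $S$ is compact. So my only real task is to bridge the gap between the hypothesis of the corollary (``$S$ is a compact global $H$-slice'') and the hypothesis of Theorem~\ref{T:slicing} (``$S$ is a small global $H$-slice, compact in the closed-map part'').

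First I would recall the fact, mentioned just before the corollary in the text and attributed to Palais \cite[p.\,300]{pal:61}, that in any proper $G$-space every compact subset is a small subset. Since $X$ is assumed to be a proper $G$-space and $S$ is compact, this fact applied to $S$ yields that $S$ is a small subset of $X$. Combined with the standing assumption that $S$ is a global $H$-slice, this places us exactly in the setting of Theorem~\ref{T:slicing}.

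Invoking Theorem~\ref{T:slicing} then gives that the slicing map $f_S \colon X \to G/H$ is continuous and open. Moreover, the ``if in addition $S$ is compact'' clause of that theorem applies verbatim under our hypothesis, yielding that $f_S$ is also closed. This completes the three conclusions.

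There is essentially no obstacle here: the substantial content is carried by Theorem~\ref{P:open} and Theorem~\ref{T:slicing}, and the corollary is just a repackaging under the more transparent hypothesis of compactness, using the single implication ``compact $\Rightarrow$ small'' in proper $G$-spaces. If anything, the only minor care needed is to confirm that the implication is applied to $S$ as a subset of the ambient proper $G$-space $X$ (not of $G(S)$), which is immediate since $S \subseteq X$ and $X$ is proper.
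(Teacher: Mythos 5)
Your proposal is correct and coincides with the paper's own derivation: the paper states the corollary immediately after recalling that every compact subset of a proper $G$-space is small (citing Palais), and then invokes Theorem~\ref{T:slicing}, exactly as you do. Nothing further is needed.
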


      \begin{remark}\label{R:small}
Theorem \ref{T:slicing} is not valid if the $H$-slice $S\subset X$ is not a small subset. Here is a simple counterexample. 
Let $G=\mathbb R_{+}$, the multiplicative group of the positive reals and  $X=\mathbb R^2\setminus\{0\}$, the Euclidean plane without the origin.  Consider   the action $G\times X\to  X$  defined by means of the ordinary scalar multiplication, i.e., if $\lambda \in G$ and $A=(x, y)\in X$, then  $\lambda *A :=\lambda A=(\lambda x, \lambda y)$. It is easy to see that this  action is proper. Further, let 
$$S:=\big\{(x, \pm \frac{1}{x}) \mid x\in \mathbb R\setminus \{0\}\big\} \cup \big\{(0, \pm 1), (\pm 1, 0)\big\}.$$
 Then, clearly,  $S$ is a global $H$-slice of $X$ with $H=\{1\}$, the trivial subgroup of $G$ while  $S$ is not a small subset of $X$. Also it is easy to see that the corresponding slicing map $f_S:X\to G$ is not continuous.

It is interesting to notice that the unit circle $S=\{A\in X\mid \Vert A\Vert =1\}$ is also a global  $H$-slice for $X$. However, in this case the slicing map $f_S:X\to G$  is continuous because $S$, being compact, is a small subset and then Corollary \ref{C:slice} applies. Moreover, in this case  $f_S(A)=\Vert A\Vert $, which clearly,    is a continuous $G$-map.
\end{remark}

           \

          \section{Orbit spaces}\label{S:orbit}
          
          Existence of slices facilitates the study of
transformation groups since, for example, it enables the reduction of global
questions about transformation groups to local ones. On the other hand, existence of global slices in proper $G$-spaces enables the reduction of studying  the orbit space of a non-compact group action  to that of a compact subgroup.
          
                    The following  result for $G$ a compact group can be found in  \cite[Proposition 1.7.6]{pal:60}.
          
   \begin{theorem}\label{Orbitspace}
  Let $G$ be a locally compact group,  $H$ a compact subgroup of $G$.
 Let  $X$ be a proper  $G$-space
  and  $S$ a small global $H$-slice of  $X$. 
  Then the inclusion $S\hookrightarrow X$ induces a homeomorphism of the orbit spaces 
   $S/H$ and $X/G$.
    \end{theorem}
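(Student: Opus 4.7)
The plan is to define the map $\bar{i}\colon S/H \to X/G$ induced by the inclusion $S\hookrightarrow X$, verify that it is a well-defined bijection using the defining properties of a slice, and then derive both continuity and openness from Theorem \ref{P:open}.

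First I would set up the commutative diagram
\[
\xymatrix{
S \ar[r]^{q_H} \ar[dr]_{p} & S/H \ar[d]^{\bar{i}} \\
 & X/G
}
\]
where $q_H\colon S\to S/H$ and $q_G\colon X\to X/G$ are the orbit projections and $p$ is the restriction of $q_G$ to $S$. One checks that $\bar{i}(Hs):=G(s)$ is well defined (if $Hs=Hs'$ then $s'\in Hs\subset Gs$, hence $G(s')=G(s)$).

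Next I would verify that $\bar{i}$ is a bijection. Surjectivity is immediate from $G(S)=X$: any $G$-orbit in $X$ meets $S$. For injectivity, suppose $s,s'\in S$ satisfy $G(s)=G(s')$, so $s'=gs$ for some $g\in G$. Then $s'\in S\cap gS$, so by item (3) of Definition \ref{D:slice} we must have $g\in H$, whence $Hs=Hs'$. This injectivity argument is really the only place where the slice axioms do nontrivial work; I don't anticipate other obstacles.

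Finally I would deduce the topological properties from Theorem \ref{P:open}. Continuity of $\bar{i}$ follows from the universal property of the quotient $q_H$: the composition $\bar{i}\circ q_H = p$ is continuous (as a restriction of the continuous $q_G$), and $q_H$ is a quotient map, so $\bar{i}$ is continuous. For openness, let $V\subset S/H$ be open. Then $q_H^{-1}(V)$ is open in $S$, and
\[
\bar{i}(V)=\bar{i}\bigl(q_H(q_H^{-1}(V))\bigr)=p\bigl(q_H^{-1}(V)\bigr),
\]
which is open in $X/G$ by Theorem \ref{P:open}(2). Hence $\bar{i}$ is a continuous open bijection, i.e., a homeomorphism. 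The whole argument really hinges on Theorem \ref{P:open}(2), so the work has essentially been done already; the present theorem just repackages that openness statement in terms of orbit spaces.
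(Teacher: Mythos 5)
Your proposal is correct and follows essentially the same route as the paper: both define the induced map on orbit spaces, establish bijectivity via $G(S)=X$ (surjectivity) and item (3) of Definition \ref{D:slice} (injectivity), and obtain continuity and openness from Theorem \ref{P:open}(2) through the quotient map $S\to S/H$. Your write-up merely spells out in more detail the step the paper summarizes as ``$p$ induces a continuous open map $p'$.''
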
 
       \begin{proof} 
    Let $p:S\to X/G$ be the restriction of the orbit projection $X\to X/G$.
    Then,  according to Theorem \ref{P:open},  $p$ is continuous and  open. Since $p$ is 
 constant on the $H$-orbits of $S$, it induces a continuous open map $p':S/H\to X/G$.
 
 It remains to show that  $p'$ is a bijection. Indeed, if $G(x)\in X/G$ is any point then $x=gs$ for some $g\in G$ and $s\in S$  since $S$ is a global $H$-slice. Then, clearly, $p(s)=G(s)=G(x)$ showing that $p'\big(H(s)\big)=G(x)$.  Thus, $p'$ is surjective.

To see that $p'$ is injective, assume that $p'\big(H(s)\big)=p'\big(H(s_1)\big)$ for some $s, s_1\in S$. Then $G(s)=G(s_1)$, yielding that $s=gs_1$ for some $g\in G$. But then, by item (3) of Definition \ref{D:slice},  we get that $g\in H$ which implies that $H(s)=H(gs_1)=H(s_1)$, as desired. This completes the proof that $p':S/H\to X/G$ is a bijection, and hence, a  homeomorphism.
             \end{proof}

     Since every compact subset of a proper $G$-space is small \cite[p.\,300]{pal:61}, Theorem \ref{Orbitspace} 
      has the following immediate corollary.
          
       \begin{corollary}\label{C:Orbitspace} Let $G$ be a locally compact group,  $H$ a compact subgroup of $G$.  Let  $X$ be a proper  $G$-space   and  $S$ a compact global $H$-slice of  $X$. 
  Then the inclusion $S\hookrightarrow X$ induces a homeomorphism of the orbit spaces 
   $S/H$ and $X/G$.
    \end{corollary}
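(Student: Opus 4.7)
The plan is essentially to reduce this corollary directly to Theorem \ref{Orbitspace}, since the only hypothesis that differs between the two statements is that $S$ is assumed compact rather than small. So the whole proof consists of justifying that a compact global $H$-slice automatically satisfies the smallness hypothesis of Theorem \ref{Orbitspace}, and then quoting that theorem verbatim.

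More precisely, first I would recall the fact (cited in the excerpt from Palais, \cite[p.\,300]{pal:61}, and already used in Corollary \ref{C:slice}) that in a proper $G$-space every compact subset is small. Applying this to the compact global $H$-slice $S \subset X$, we conclude that $S$ is a small global $H$-slice of $X$. This is the one substantive verification, and it is no work at all because the lemma from Palais does it for us.

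Having verified the smallness hypothesis, I would invoke Theorem \ref{Orbitspace} directly: the inclusion $S\hookrightarrow X$ then induces a homeomorphism $S/H \to X/G$, which is exactly the conclusion we want. No further argument is needed, since the homeomorphism produced in Theorem \ref{Orbitspace} is defined by sending an orbit $H(s) \in S/H$ to $G(s) \in X/G$, which is precisely the map induced by the inclusion.

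There is really no obstacle here; the point of stating this as a corollary is to flag the most commonly encountered special case, where the slice is compact (as it is, for instance, for actions on locally compact $G$-spaces with compact stabilizer tubes). So the proof is a single sentence invoking the cited smallness fact and then Theorem \ref{Orbitspace}.
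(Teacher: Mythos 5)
Your proposal is correct and is exactly the paper's argument: the corollary is deduced from Theorem \ref{Orbitspace} by noting that every compact subset of a proper $G$-space is small (citing \cite[p.\,300]{pal:61}). Nothing further is needed.
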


    It turns out that  in the presence of compactness of the global $H$-slice $S$,  the other assumptions in Theorem \ref{Orbitspace} may essentially be weakened. Namely, the following version of Theorem \ref{Orbitspace} holds true.

   \begin{proposition}\label{P:Orbitspace}
  Let $G$ be any topological group and  $H$ a closed subgroup of $G$.
 Let  $X$ be a  $G$-space
  and  $S$ a compact global $H$-slice of  $X$. 
  Then the inclusion $S\hookrightarrow X$ induces a homeomorphism of the orbit spaces 
   $S/H$ and $X/G$ provided that $X/G$ is  Hausdorff.
    \end{proposition}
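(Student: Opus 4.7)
The plan is to follow the template of the proof of Theorem \ref{Orbitspace}, but to replace the appeal to openness (which relied on properness via Theorem \ref{P:open}) with the classical principle that a continuous bijection from a compact space to a Hausdorff space is automatically a homeomorphism.

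First I would let $p:S\to X/G$ denote the restriction of the orbit projection $X\to X/G$. As a restriction of a continuous map, $p$ is continuous. Moreover $p$ is constant on every $H$-orbit of $S$, since $H\subset G$ forces any two points in a common $H$-orbit to lie in a common $G$-orbit. Thus $p$ factors through the quotient $S\to S/H$ to produce a continuous map $p':S/H\to X/G$.

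Next I would verify that $p'$ is a bijection by the same argument as in Theorem \ref{Orbitspace}: surjectivity follows from $G(S)=X$ (given an orbit $G(x)\in X/G$ write $x=gs$ with $s\in S$, so $p(s)=G(x)$); injectivity follows from item (3) of Definition \ref{D:slice}, since $p'(H(s))=p'(H(s_1))$ forces $s=gs_1$ for some $g\in G$, and then $gS\cap S\neq \emptyset$ gives $g\in H$, so $H(s)=H(s_1)$.

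Finally I would invoke compactness: since $S$ is compact and the orbit map $S\to S/H$ is continuous and surjective, $S/H$ is compact, and by hypothesis $X/G$ is Hausdorff, so the continuous bijection $p':S/H\to X/G$ is a homeomorphism. The ``main obstacle'' is really just recognizing that this compact-to-Hausdorff argument avoids proving openness of $p'$ directly; once that is seen, neither properness of the action nor local compactness of $G$ plays any role in the argument.
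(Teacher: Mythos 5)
Your proposal is correct and follows essentially the same route as the paper: the author likewise takes $p'$ to be the continuous bijection constructed in the proof of Theorem \ref{Orbitspace} and then concludes it is a homeomorphism because a continuous bijection from the compact space $S/H$ to the Hausdorff space $X/G$ is closed. Your write-up merely spells out the bijectivity verification that the paper imports by reference.
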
 
       \begin{proof} 
    Let $p:S\to X/G$ and $p':S/H\to X/G$ be as in the proof of Theorem \ref{Orbitspace}.
    Since $p'$ is a continuous bijection, it remains to show that it is a closed map. But this 
    is due to  the hypotheses since  $S/H$ is compact and $X/G$ is Hausdorff. 
             \end{proof}

           \medskip

    \section{Extension to an equivariant map}\label{S:extension}
                
             Let $G$ be a locally compact group and  $H\subset G$ a compact subgroup.   
                If $X$ is a proper $G$-space and $S$  a global $H$-slice of $X$, then it is well known (cf. \cite[Proposition 2.1.3]{pal:61})  that any $H$-equivariant map $f:S\to Y$ uniquely extends to a $G$-equivariant map $F:X\to Y$. This result can be generalized in the following manner (for compact group actions it was proved in \cite[Ch.\,I,  Theorem 3.3]{bre:72}).
                
 \begin{theorem}\label{T:section}
  Let $G$ be a locally compact group acting properly on the space $X$, and let $Y$ be any $G$-space. 
Let $S$  be any closed small subset of $X$, and let $f: S\to  Y$ be a continuous map such that whenever
$s$ and $gs$  are both in $S$ (for some $g\in  G$), then $f(gs) = gf(s)$. Then
$f$ can be extended uniquely to an equivariant map $F: G(S)\to  Y$.
    \end{theorem}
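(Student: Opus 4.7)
The natural definition is to set $F(gs) := g\,f(s)$ for $g \in G$ and $s \in S$. The first step is to verify well-definedness: if $gs = g's'$ with $s, s' \in S$, then $s' = (g'^{-1}g)s$, so both $s$ and $(g'^{-1}g)s$ lie in $S$; the hypothesis on $f$ yields $f(s') = (g'^{-1}g)f(s)$, and multiplying on the left by $g'$ gives $g'f(s') = gf(s)$, as required. Equivariance $F(hgs) = hF(gs)$ is then immediate from the definition. Uniqueness is forced by equivariance alone, since any equivariant extension must satisfy $F(gs) = gF(s) = gf(s)$ for $s \in S$.

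The main obstacle is continuity of $F$ on $G(S)$. The plan is to factor $F$ through the restricted action map. Let $\alpha: G \times S \to G(S)$ denote the restriction of the action and define an auxiliary map $\varphi: G \times S \to Y$ by $\varphi(g, s) = gf(s)$. By continuity of $f$ together with continuity of the $G$-action on $Y$, the map $\varphi$ is continuous, and by the definition of $F$ one has the identity $F \circ \alpha = \varphi$. It therefore suffices to show that $\alpha$ is a quotient map onto $G(S)$, for then continuity of $F$ follows from the universal property of quotient maps.

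For this I would invoke \cite[Proposition 1.4(c)]{ab:78} (already used in the proof of Theorem \ref{P:open}): since $S$ is a closed small subset of the proper $G$-space $X$, the action map $G \times S \to X$ is closed. In particular its image $G(S)$ is closed in $X$, and restricting the codomain to $G(S)$ preserves closedness, so $\alpha: G \times S \to G(S)$ is a continuous closed surjection, hence a quotient map. Combined with the factorization $F \circ \alpha = \varphi$ and the well-definedness argument above (which shows $\varphi$ is constant on the fibers of $\alpha$), this yields continuity of $F$ and completes the proof. I expect no further difficulty; the closedness statement from \cite{ab:78} is precisely what makes the argument work for an arbitrary closed small $S$, rather than only for small global slices as in Theorem \ref{P:open}.
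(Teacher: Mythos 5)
Your proof is correct, but it takes a genuinely different route from the one in the paper. Where you reduce continuity of $F$ to the statement that the restricted action map $\alpha:G\times S\to G(S)$ is a quotient map --- obtaining this from the closedness of $G\times S\to X$ via \cite[Proposition 1.4(c)]{ab:78} and then invoking the universal property of quotients applied to $\varphi(g,s)=gf(s)$ --- the paper argues continuity directly with nets: given $g_is_i\rightsquigarrow gs$, it uses smallness of $S$ to choose a neighborhood $U$ of $s$ with $\langle S,U\rangle$ relatively compact, passes to a subnet so that $g^{-1}g_i\rightsquigarrow h$, deduces $s_i\rightsquigarrow h^{-1}s\in S$ from closedness of $S$, and then applies the compatibility hypothesis on $f$ to conclude $F(g_is_i)\rightsquigarrow F(gs)$. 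Your argument is shorter and more structural, and it reuses a citation the paper already relies on in the proof of Theorem \ref{P:open}; its one point of dependence is that the closed-map statement of Abels really does apply to an arbitrary closed small $S$ (not just a slice), which is exactly how the paper itself uses it, so this is safe. The paper's net argument, by contrast, is self-contained at this point and makes visible precisely where smallness and closedness of $S$ enter. All the routine parts of your write-up (well-definedness, equivariance, uniqueness, continuity of $\varphi$, and the fact that a continuous closed surjection is a quotient map) are handled correctly.
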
       
   
   \begin{proof}     For any $g\in G$ and $s\in S$,  put $F(gs) = gf(s)$.  To see that $F$  is well
defined let $gs = g's'$. Then $s = (g^{-1}g')s' $ so that $f(s) = f((g^{-1}g')s')
= g^{-1}g'f(s')$, by assumption. 
   Thus, $gf(s) = g'f(s')$, as desired.
   
    To see that
$F$  is continuous, let $(x_i)$ be a net in $G(S)$  converging to $ x\in G(C)$. Then
$x_i = g_is_i$ and $x=gs$ for some $s, s_i\in S$ and $g, g_i\in G$.  Thus,  
    \begin{equation}\label{1}
    g_is_i \rightsquigarrow gs.
    \end{equation}
    
     Since $S$ is a small set, we can choose a neighborhood $U$ of $s$ such that the transporter $\langle S, U\rangle$ has compact closure in $G$. Then, by convergence, there exists an index $i_0$ such that 
    $(g^{-1}g_i)s_i\in U$ whenever $i\ge i_0$. Hence $g^{-1}g_i\in \langle S, U\rangle$ for $i\ge i_0$. Since  $\langle S, U\rangle$ has compact closure in $G$, by passing to a subnet we may assume that      
$
    g^{-1}g_i \rightsquigarrow h
$
   for some $h\in G$. 
    Then $g_i \rightsquigarrow gh$ and 
        \begin{equation}\label{3}
    g_i^{-1} \rightsquigarrow h^{-1}g^{-1}.
        \end{equation}    

Now,  (\ref{1}) and (\ref{3}) imply that $s_i \rightsquigarrow h^{-1}s$. Since $s_i\in S$, by closedeness of $S$ we conclude that $ h^{-1}s\in S$. Then by the hypothesis   we have  $f(h^{-1}s)=h^{-1}f(s)$ and by continuity of $f$  we get that 
$f(s_i)\rightsquigarrow f(h^{-1}s)=h^{-1}f(s).$ Consequently, 
$$F(x_i)=F(g_is_i)=g_if(s_i)\rightsquigarrow gh(h^{-1}f(s))=gf(s)=F(gs)=F(x).$$
This proves the continuity of $F$, as desired.
           \end{proof}

          \medskip
          
          Recall that a continuous map $s:X/G\to X$ is called a cross section for the orbit map $\pi: X\to X/G$ if the composition $\pi s$ is the identity map of $X/G$. It is easy to see that the image $C:=s(X/G)$ is closed in $X$ (since $X$ is Hausdorff). It turns out that if, in addition, $C$ is any small subset of $X$, then it uniquely determines the cross section. Because of this fact, we shall  use the term \lq\lq cross section\rq\rq \ for the closed  image of a cross section.

         More precisely,  we have the following result. 
          
 \begin{proposition}\label{P:small}
  Let $X$ be a proper $G$-space with $G$ a locally compact group and let $\pi :X\to X/G$ be the orbit map. Let $C$ be a small closed
subset of $X$ touching each orbit in exactly one point. Then the map $s: X/G\to X$ defined by $s(\pi(x)) = G(x) \cap  C$ is a cross section. Conversely, the image
of any cross section is closed in $ X.$
 \end{proposition}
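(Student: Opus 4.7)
First, $s$ is well-defined: since $C$ meets each orbit in exactly one point, $G(x) \cap C$ is a singleton depending only on the orbit $G(x)$, i.e.\ only on $\pi(x)$. The identity $\pi \circ s = \mathrm{id}_{X/G}$ is automatic because $s(\pi(x)) \in G(x)$. Since $\pi$ is a quotient map, continuity of $s$ is equivalent to continuity of the composition $\sigma := s \circ \pi : X \to X$, which sends $x$ to the unique point of $G(x) \cap C$; so the task reduces to showing $\sigma$ is continuous.

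To prove $\sigma$ is continuous, I would argue with nets in the spirit of the proof of Theorem \ref{T:section}. Suppose $x_i \to x$ in $X$ and write $\sigma(x_i) = c_i \in C$ and $\sigma(x) = c \in C$, so there exist $g_i, g \in G$ with $x_i = g_i c_i$ and $x = gc$. Using the smallness of $C$, pick a neighborhood $V$ of $x$ whose transporter $\langle C, V \rangle$ has compact closure in $G$. Eventually $x_i = g_i c_i \in V$ with $c_i \in C$, so $g_i \in \langle C, V \rangle$ for large $i$. Passing to a subnet, $g_i \to \tilde g$ for some $\tilde g \in G$, whence $c_i = g_i^{-1} x_i \to \tilde g^{-1} x$. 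Closedness of $C$ gives $\tilde g^{-1} x \in C$, and since $\tilde g^{-1} x$ lies on $G(x)$, the single-point-per-orbit hypothesis forces $\tilde g^{-1} x = c$. The standard subnet argument---every subnet of $(c_i)$ has a sub-subnet converging to the fixed point $c$---then yields $c_i \to c$, establishing continuity of $\sigma$ and hence of $s$.

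For the converse, let $s : X/G \to X$ be any cross section and set $C = s(X/G)$. For $c \in C$, write $c = s(y)$ with $y \in X/G$; applying $\pi$ gives $\pi(c) = y$, so $s(\pi(c)) = s(y) = c$, i.e.\ $s \circ \pi$ restricts to the identity on $C$. Now if a net $c_i \in C$ converges to $x \in X$, continuity of $s \circ \pi$ gives $c_i = s(\pi(c_i)) \to s(\pi(x))$, and the Hausdorffness of $X$ forces $x = s(\pi(x)) \in C$, so $C$ is closed.

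The main obstacle is the continuity step for $\sigma$. Without some compactness built into the transporter $\langle C, V\rangle$ one cannot extract a convergent subnet of the $g_i$, and this is exactly where the hypothesis that $C$ is small is needed; the remaining pieces---well-definedness, the section identity, and the converse---are essentially bookkeeping.
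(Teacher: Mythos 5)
Your proof is correct, and the continuity step takes a genuinely different (though closely related) route from the paper's. The paper disposes of continuity in two lines: for $A\subset C$ closed, the saturation $G(A)=\pi^{-1}\bigl(s^{-1}(A)\bigr)$ is closed in $X$ by Abels' result that the restricted action map $G\times C\to X$ is closed for $C$ a closed small subset, and since $\pi$ is a quotient map this makes $s^{-1}(A)$ closed in $X/G$. You instead prove continuity of $s\circ\pi$ directly by a net argument: extract a convergent subnet of the translating elements $g_i$ from the compact closure of the transporter $\langle C,V\rangle$, use closedness of $C$ and the one-point-per-orbit hypothesis to identify the limit, and finish with the every-subnet-has-a-sub-subnet criterion. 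In effect you re-prove inline exactly the special case of Abels' Proposition 1.4(c) that the paper cites; your version is self-contained and makes visible where smallness enters (no convergent subnet of the $g_i$ without it, as you note), while the paper's is shorter by delegation. Your treatment of the converse is identical to the paper's, and your observation that it suffices to check that $s\circ\pi$ is the identity on $C$ before applying Hausdorffness is the same bookkeeping the paper performs.
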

\begin{proof} We need to show that $s$  is continuous. For this let $A\subset C$ be
closed. Since $C$ is a small closed subset, one can apply \cite[Proposition 1.4(c)]{ab:78} according to which  the set   $s^{-1}(A) = G(A)$ is closed in $X$, as desired. 
 
 For the converse, let  
$C = s(X/G)$ and let $(x_i)$ be a net in $C$ converging to $x\in  X$. 
  We have $\lim p(x_i)= p(x)$ and  $\lim s\big(p(x_i)\big)= s\big(p(x)\big)$. Therefore, $x=\lim x_i =\lim s\big(p(x_i)\big)=s\big(p(x)\big)\in C$ showing that $x\in C$. Thus,  $C$  is closed.

 \end{proof}

          Theorem \ref{T:section} has the following interesting corollary.
           
 \begin{corollary} Let $G$ be a locally compact group, $X$ a proper $G$-space and  $Y$  any $G$-space. Assume that  $C\subset X$ is a closed small cross section of the orbit map $p:X\to X/G$. Then each continuous map  $f: C\to  Y$  such that $G_c\subset G_{f(c)}$ for all $c\in C$, has a  unique extension  to an equivariant map $F: X\to Y$. 
  \end{corollary}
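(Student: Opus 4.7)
The plan is to reduce this corollary directly to Theorem \ref{T:section}. The hypotheses on $X$, $Y$, $G$ and on $C$ as a closed small subset match exactly. The only thing that is not immediately given is the compatibility condition on $f$, namely that whenever $c$ and $gc$ both lie in $C$ for some $g\in G$, one has $f(gc)=gf(c)$. Verifying this compatibility is the key step.

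I would argue the compatibility as follows. Suppose $c\in C$ and $gc\in C$ for some $g\in G$. Then $c$ and $gc$ are two points of $C$ lying in the same orbit $G(c)$. Because $C$ is a cross section, it touches each orbit in exactly one point, so $gc=c$, i.e., $g\in G_c$. The hypothesis $G_c\subset G_{f(c)}$ then forces $g\in G_{f(c)}$, and hence $gf(c)=f(c)=f(gc)$. So the compatibility condition of Theorem \ref{T:section} holds trivially, essentially because the cross-section property collapses the situation.

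Having verified this, Theorem \ref{T:section} produces a unique equivariant continuous extension $F:G(C)\to Y$ of $f$. Since $C$ is a cross section of the orbit map, every orbit meets $C$, so $G(C)=X$ and $F$ is defined on all of $X$. Uniqueness of the extension is inherited directly from the uniqueness clause in Theorem \ref{T:section} (any equivariant extension is forced to send $gc$ to $gf(c)$).

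I do not expect a real obstacle here: the corollary is essentially a packaging of Theorem \ref{T:section} with the observation that the cross-section property trivializes the hypothesis on $f$. The only place one must be slightly careful is in noticing that the assumption $G_c\subset G_{f(c)}$ is exactly what is needed to make the pointwise equation $gc=c\Rightarrow gf(c)=f(c)$ hold, so that the extension formula $F(gc):=gf(c)$ is well defined on the (possibly nontrivial) stabilizer.
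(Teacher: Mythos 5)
Your proposal is correct and follows essentially the same route as the paper: you verify the compatibility hypothesis of Theorem \ref{T:section} by noting that the cross-section property forces $gc=c$, hence $g\in G_c\subset G_{f(c)}$, and then apply that theorem. The only (harmless) addition is your explicit remark that $G(C)=X$, which the paper leaves implicit.
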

  
  \begin{proof}  
 If $c$ and $gc$ belong to $C$ for some $g\in G$, then $gc=c$ since $C$ touchs the orbit $G(c)$ in exactly one point. Thus, $g\in G_c$. Since $G_c\subset G_{f(c)}$, we get that  $gf(c)=f(c)=f(gc)$. Thus, the hypotheses of Theorem \ref{T:section} are  fulfilled, and hence, its application completes the proof.
 
      \end{proof}
  
 \begin{remark} The example of the proper $G$-space $X$ in Remark \ref{R:small} shows that in Proposition \ref{P:small} one cannot omit the smallness condition on the set $C$.
 \end{remark}

                 %\begin{definition}
 %Let $G$ be a topological  group, $H$ a closed subgroup of $G$ and $X$ a
%$G$-space. A subset $S\subset  X$ is called an $H$-slice in $X$, if $G(S)$ is open and there exists an equivariant map $f:G(S)\to G/H$ with  $f^{-1}(eH)=S$. If in addition $G(S) = X$, then we say that $S$ is a global $H$-slice of $X$.
 
%\end{definition}

 \section{An Application}\label{S:BM}
 In this section we apply Corollary \ref{C:Orbitspace}  to give a short proof of the compactness of the Banach-Mazur compacta $BM(n)$, $n\ge 1$.

  As usual, for an integer $n\ge 1$, 
  $\mathbb R^n$ denotes the $n$-dimensional Euclidean space with the standard norm, and 
   $GL(n)$ denotes the real full linear group.
We denote by $\mathcal B(n)$ the hyperspace of all compact convex
bodies  of $\mathbb R^n$  with odd symmetry about the origin, equipped with the Hausdorff metric 
$$d_H(A,B)=\max\left\{ \sup\limits_{b\in B}d(b,A), ~ \sup\limits_{a\in A} d(a, B)\right\},$$
where $d$ is the standard Euclidean metric on $\Bbb R^n$.

We consider  the natural  action of $GL(n)$ on  $\mathcal B(n)$  defined as follows:
$
(g, A)\longmapsto gA; \quad gA=\{ga \mid  \ a\in A\}, \ \ \text{for all}
 \  \ g\in GL(n), \ \ A\in \mathcal B(n).
$

In \cite{ant:Bull} it was proved that the $GL(n)$-space  $\mathcal N(n)$  consisting of all norms $\varphi: \mathbb R^n\to \mathbb R$, endowed with the compact-open topology and the natural action of $GL(n)$, is a proper  $GL(n)$-space.  
Using the fact that $\mathcal B(n)$ is $GL(n)$-equivariantly homeomorphic  to  $\mathcal N(n)$ (see \cite[p.\,210]{ant:00}),  we infer that  $\mathcal B(n)$ is a proper  $GL(n)$-space \cite{ant:00}.  A direct  way of proving the properness of the $GL(n)$-space $\mathcal B(n)$ one can find in  \cite[Theorem 3.3]{antjo:13}.

  According to a  theorem of F.~John \cite{john},  for any $A\in \mathcal B(n)$, there is unique maximal volume   ellipsoid $j(A)$ contained in $A$ (respectively, minimal volume ellipsoid $l(A)$ containig $A$). Usually, $j(A)$ is called  the John ellipsoid of $A$,  and $l(A)$ is called the  L\" owner ellipsoid of $A$.

This fact allows to define  two  {\it special} \ global $O(n)$-slices in $\mathcal B(n)$, where $O(n)$     denotes the orthogonal subgroup of $GL(n)$.
 
Denote  by $J(n)$ the subset of $\mathcal B(n)$ consisting of all bodies $A\in \mathcal B(n)$ for which the ordinary Euclidean unit ball 
 $\mathbb B^n=\big\{(x_1,\dots, x_n)\in\mathbb R^{n} \  \big | \ \sum_{i=1}^nx_i^2\leq 1\big\} 
$
 is the maximal volume  ellipsoid {\it contained} in $A$. In \cite[Theorem 4]{ant:00} it was proved that $J(n)$ 
is {\it a global} \ $O(n)$-slice for $\mathcal B(n)$.

Analogously, the subspace $L(n)$ of $\mathcal B(n)$ consisting of all bodies $A\in \mathcal B(n)$ for which  $\mathbb B^n$ is the minimal volume  ellipsoid {\it containing}  $A$, is {\it a global} \ $O(n)$-slice for $\mathcal B(n)$. 

Lets give a direct proof  that $J(n)$ and $L(n)$ are compact.

\begin{proposition}\label{P:J(n)compact}
 $J(n)$ is compact.

\end{proposition}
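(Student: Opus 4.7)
The plan is to show that $J(n)$ is both bounded and closed as a subset of $(\mathcal B(n),d_H)$, and then invoke the Blaschke selection theorem.

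For boundedness I would apply F.~John's classical inequality in the centrally symmetric case: if $\mathbb B^n$ is the maximal-volume inscribed ellipsoid of some $A\in\mathcal B(n)$, then $A\subset \sqrt n\,\mathbb B^n$. Consequently,
$$\mathbb B^n\subset A\subset \sqrt n\,\mathbb B^n\qquad\text{for every }A\in J(n),$$
so $J(n)$ lies inside the family of symmetric convex bodies sandwiched between $\mathbb B^n$ and $\sqrt n\,\mathbb B^n$. This family is uniformly bounded and uniformly supports a ball around the origin, hence is compact in the Hausdorff metric by the Blaschke selection theorem.

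For closedness, take a sequence $(A_k)\subset J(n)$ with $A_k\to A$ in $d_H$. Passing to the limit in the inclusion $\mathbb B^n\subset A_k$ gives $\mathbb B^n\subset A$, so it only remains to verify that $\mathbb B^n$ is the maximal-volume ellipsoid inscribed in $A$. Suppose not: then there exists an ellipsoid $E\subset A$ with $\mathrm{vol}(E)>\mathrm{vol}(\mathbb B^n)$. The slight contraction $(1-\epsilon)E$ lies in the \emph{interior} of $A$, and the standard fact that Hausdorff convergence of symmetric convex bodies containing a common ball around the origin is equivalent to convergence of the associated Minkowski functionals (uniformly on compact sets) implies $(1-\epsilon)E\subset A_k$ for all sufficiently large $k$. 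Choosing $\epsilon>0$ so that $\mathrm{vol}((1-\epsilon)E)>\mathrm{vol}(\mathbb B^n)$ still holds, we obtain an ellipsoid inscribed in $A_k$ of volume strictly greater than that of $\mathbb B^n$, contradicting the fact that $\mathbb B^n=j(A_k)$ is the maximal-volume inscribed ellipsoid of $A_k$.

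The main delicate point I expect is the inner-semicontinuity statement: that $(1-\epsilon)E\subset\mathrm{int}(A)$ together with $A_k\to A$ forces $(1-\epsilon)E\subset A_k$ for large $k$. For the symmetric convex bodies at hand this follows from the uniform boundedness and the uniform containment $\mathbb B^n\subset A_k$, which together guarantee that the Minkowski functionals $\mu_{A_k}$ converge to $\mu_A$ uniformly on compact sets; the estimate $\mu_A((1-\epsilon)E)\le 1-\eta$ for some $\eta>0$ then gives $\mu_{A_k}\le 1$ there for large $k$. If a fully self-contained argument proves cumbersome, an alternative is to cite the known continuity of the John-ellipsoid map $A\mapsto j(A)$ on $\mathcal B(n)$, from which closedness of $J(n)=j^{-1}(\mathbb B^n)$ follows immediately.
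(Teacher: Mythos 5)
Your proof is correct, and its skeleton coincides with the paper's: both establish that $J(n)$ is a closed subset of a compact space of convex bodies, using F.~John's estimate $A\subset\sqrt n\,\mathbb B^n$ for the boundedness half. The difference is in the closedness half. The paper disposes of it by citation: after observing that a Hausdorff limit $A$ of bodies $A_k\in J(n)$ still contains $\mathbb B^n$ and hence lies in $\mathcal B(n)$, it invokes Theorem 4(4) of \cite{ant:00}, which asserts that $J(n)$ is closed in $\mathcal B(n)$. You instead prove this closedness from scratch: assuming an ellipsoid $E\subset A$ with $\mathrm{vol}(E)>\mathrm{vol}(\mathbb B^n)$, you shrink it slightly into the interior of $A$ and transfer it into $A_k$ for large $k$, contradicting the maximality of $\mathbb B^n=j(A_k)$. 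The transfer step you flag as delicate is indeed sound and can be made very short: since $\mathbb B^n\subset A_k$ for all $k$, Hausdorff convergence gives $A\subset A_k+\delta\mathbb B^n\subset(1+\delta)A_k$ for large $k$, whence $(1+\delta)^{-1}E\subset A_k$, and for $\delta$ small this ellipsoid still beats $\mathbb B^n$ in volume; your formulation via uniform convergence of Minkowski functionals is an equivalent packaging. The two routes buy different things: the paper's is shorter but leans on an external result from \cite{ant:00}, while yours is self-contained and in effect reproves that result; your use of the Blaschke selection theorem versus the paper's use of the compactness of $cc(D)$ (Nadler--Quinn--Stavrakas) is immaterial, as the two are interchangeable here.
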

\begin{proof} It is known  that there is a closed ball $D\subset \mathbb R^n$ centered at the origin such that $A\subset D$ for every $A\in J(n)$. Moreover, it was proved by F. John \cite{john} that the radio of $D$ may be taken even $\sqrt n$.

 Thus $J(n)$ is a subset of the hyperspace $cc(D)$ of all non-empty compact convex  subsets of $D$ endowed with the Hausdorff metric topology.
Since  $cc(D)$ is compact (in fact,  it is  homeomorphic to the Hilbert cube \cite[Theorem 2.2]{Nadler}), it suffices to show that $J(n)$ is  closed in  $cc(D)$. But this is evident since, if 
$(A_k)_{k\in\mathbb N}\subset J(n)$ is a sequence converging to  $A\in cc(D)$, then  $A$ should contain the unit ball $\mathbb B^n$  since every $A_k$ does. Hence, $A$ has non-empty interior, i.e., it  is a convex body, and then,  $A\in \mathcal B(n)$. Now we apply \cite[Theorem 4(4)]{ant:00} according to which $J(n)$ is closed in  $\mathcal B(n)$. This yields that $A\in J(n)$, and hence, $J(n)$ is closed in $cc(D)$, as desired.

\end{proof}

In a similar way,  one can prove the compactness of the global $O(n)$-slice $L(n)$.  Here one should take into account that every $A\in L(n)$ contains the closed ball of radio $\frac{1}{\sqrt n}$ centered at the origin of $\mathbb R^n$ (see \cite[p.\,559]{john}).

\smallskip

Thus, $\mathcal B(n)$ is a proper  $GL(n)$-space and $J(n)$ is a global $O(n)$-slice of it (see  \cite[Theorem 4]{ant:00}). Since by Proposition \ref{P:J(n)compact}, $J(n)$ is compact,
 Corollary \ref{C:Orbitspace}  immediately yields the following corollary (cf. \cite[Corollary 1]{ant:00}). 

\begin{corollary}\label{C:1}  The orbit space $\mathcal B(n)/GL(n)$
 is homeomorphic to the $O(n)$-orbit space $J(n)/O(n)$.
\end{corollary}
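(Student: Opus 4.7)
The plan is to simply verify the hypotheses of Corollary \ref{C:Orbitspace} and then invoke it with $G = GL(n)$, $H = O(n)$, $X = \mathcal B(n)$, and $S = J(n)$. All three ingredients have been arranged in the paragraphs immediately preceding the statement, so the argument should amount to little more than a checklist.

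First, I would recall that $GL(n)$ is a locally compact group and $O(n)$ is a compact subgroup of it, so the group-theoretic hypotheses hold at once. Next, the properness of the $GL(n)$-action on $\mathcal B(n)$ has been established via the equivariant homeomorphism with $\mathcal N(n)$ (citing \cite{ant:00}, with the direct argument in \cite[Theorem 3.3]{antjo:13}). Then I would cite \cite[Theorem 4]{ant:00} to record that $J(n)$ is a global $O(n)$-slice of $\mathcal B(n)$. Finally, Proposition \ref{P:J(n)compact} supplies the last missing hypothesis, namely compactness of $J(n)$.

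With all hypotheses in place, Corollary \ref{C:Orbitspace} applies verbatim and asserts that the inclusion $J(n) \hookrightarrow \mathcal B(n)$ induces a homeomorphism between $J(n)/O(n)$ and $\mathcal B(n)/GL(n)$, which is exactly the statement to be proved. There is no real obstacle here: the conceptual work has been front-loaded into Theorem \ref{P:open}, Theorem \ref{Orbitspace} and Proposition \ref{P:J(n)compact}, and the only task remaining is a one-line invocation. If anything, the only thing worth emphasizing in the write-up is which instance of the general theorem is being used and where compactness of $J(n)$ enters, so that the reader sees at a glance that no additional argument is required.
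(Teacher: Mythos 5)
Your proposal is correct and follows exactly the same route as the paper: the text preceding Corollary \ref{C:1} assembles the same checklist (properness of the $GL(n)$-action on $\mathcal B(n)$, $J(n)$ being a global $O(n)$-slice by \cite[Theorem 4]{ant:00}, and compactness of $J(n)$ from Proposition \ref{P:J(n)compact}) and then invokes Corollary \ref{C:Orbitspace} verbatim. No discrepancies to report.
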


Corollary \ref{C:slice} and Proposition \ref{P:J(n)compact} yield the following corollary.

  \begin{corollary} The slcing map $f_{J(n)}:\mathcal B(n)\to GL(n)/O(n)$ corresponding to the global $O(n)$-slice $J(n)$ is continuous, open and closed.  
\end{corollary}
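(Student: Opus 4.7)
The plan is a direct application of Corollary \ref{C:slice}, once we verify that all of its hypotheses are in place for the quadruple $(G,H,X,S)=(GL(n), O(n), \mathcal B(n), J(n))$. First I would note that $GL(n)$ is locally compact (as an open subset of the Euclidean space of $n\times n$ matrices) and that $O(n)$ is a compact subgroup of $GL(n)$.

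Next I would invoke the fact, already recorded in the excerpt, that $\mathcal B(n)$ endowed with the natural $GL(n)$-action is a proper $GL(n)$-space (this comes from \cite{ant:00}, or alternatively \cite[Theorem 3.3]{antjo:13}, via the $GL(n)$-equivariant homeomorphism between $\mathcal B(n)$ and $\mathcal N(n)$). I would also recall the statement from \cite[Theorem 4]{ant:00} that $J(n)$ is a global $O(n)$-slice of $\mathcal B(n)$; this already gives $O(n)(J(n))=J(n)$, closedness of $J(n)$ in its saturation, $gJ(n)\cap J(n)=\emptyset$ for $g\notin O(n)$, and $GL(n)(J(n))=\mathcal B(n)$.

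The remaining ingredient is compactness of the slice, which is furnished by Proposition \ref{P:J(n)compact}. Putting these three pieces together, the hypotheses of Corollary \ref{C:slice} are met, so the slicing map $f_{J(n)}:\mathcal B(n)\to GL(n)/O(n)$ is continuous, open, and closed. Since all the required facts are already established elsewhere in the paper (or cited verbatim), there is essentially no obstacle; the only minor point to be careful about is that Corollary \ref{C:slice} is stated for $f_S:X\to G/H$ with $G(S)=X$, which is exactly the global slice situation we have here, so no additional work on the saturation is needed.
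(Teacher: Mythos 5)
Your proposal is correct and matches the paper's argument exactly: the paper likewise obtains this corollary by combining Corollary \ref{C:slice} with Proposition \ref{P:J(n)compact}, using the properness of the $GL(n)$-space $\mathcal B(n)$ and the fact from \cite[Theorem 4]{ant:00} that $J(n)$ is a global $O(n)$-slice. Your verification of the hypotheses is just a more explicit spelling-out of the same one-line deduction.
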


The compactness of the orbit space $\mathcal B(n)/GL(n)$ originally was established in \cite{Macbeath}. Corollary \ref{C:1} and Proposition   \ref{P:J(n)compact} immediately yield an alternative and short proof of the compactness of  $\mathcal B(n)/GL(n)$, known as the Banach-Mazur compactum $BM(n)$ (see \cite{ant:00}).            
                          
  \begin{corollary} $\mathcal B(n)/GL(n)$ is a compact metrizable space. 
\end{corollary}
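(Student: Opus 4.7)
The plan is to combine Corollary \ref{C:1} with Proposition \ref{P:J(n)compact} and then quote standard facts about orbit spaces of compact group actions on compact metric spaces. By Corollary \ref{C:1}, the orbit space $\mathcal B(n)/GL(n)$ is homeomorphic to $J(n)/O(n)$, so it suffices to show that $J(n)/O(n)$ is a compact metrizable space.

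For compactness, I would argue as follows. By Proposition \ref{P:J(n)compact}, $J(n)$ is compact. The orbit projection $q:J(n)\to J(n)/O(n)$ is continuous and surjective, so its image $J(n)/O(n)$ is compact. (Note this only uses compactness of $J(n)$, not of $O(n)$; but it does require verifying that $J(n)/O(n)$ is Hausdorff, which follows because $O(n)$ is a compact group acting on the metric space $J(n)$, so orbits are compact and the quotient is Hausdorff.)

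For metrizability, I would use that $J(n)$ is a compact metric subspace of the hyperspace $cc(D)$ (equipped with the Hausdorff metric, as in the proof of Proposition \ref{P:J(n)compact}), and $O(n)$ is a compact topological group acting continuously on it. The standard orbit-space pseudometric
\[
\widetilde d\bigl(O(n)(A),\, O(n)(B)\bigr)\;=\;\inf_{g,h\in O(n)} d_H(gA,\,hB)
\]
is a genuine metric on $J(n)/O(n)$ which induces the quotient topology; this is a classical fact about compact group actions on compact metric spaces. Alternatively, one can invoke Urysohn's metrization theorem: $J(n)/O(n)$ is compact Hausdorff, and second countability is inherited from $J(n)$ via the continuous surjection $q$.

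I do not anticipate a serious obstacle here: the statement is essentially a bookkeeping corollary of the results already assembled in the section. The only mild point to verify carefully is Hausdorffness of $J(n)/O(n)$ (used implicitly when concluding compactness gives metrizability via Urysohn), but this is immediate from compactness of $O(n)$. After that, the chain $\mathcal B(n)/GL(n)\cong J(n)/O(n)$, compactness of $J(n)$, and compactness of $O(n)$ gives the result, thereby recovering the compactness of the Banach--Mazur compactum $BM(n)$ in a short and transparent way.
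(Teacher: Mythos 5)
Your proposal follows exactly the route the paper intends: combine Corollary \ref{C:1} with Proposition \ref{P:J(n)compact} to reduce to the compact quotient $J(n)/O(n)$, with the metrizability and Hausdorffness points being standard facts about compact group actions on compact metric spaces that the paper leaves implicit. The argument is correct and essentially identical to the paper's.
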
                    

\

\bibliographystyle{amsplain}

\bibliography{triquot}

\end{document}